\newtheorem{theorem}{Theorem}
\newtheorem{proposition}{Proposition}
\newtheorem{definition}{Definition}
\newtheorem{application}{Application}
\newenvironment{proof}{\emph{Proof.}} {\quad \hfill $\blacksquare$ \newline}
\begin{document}

\def\spacingset#1{\renewcommand{\baselinestretch}%
{#1}\small\normalsize} \spacingset{1}



  \date{}
  \title{\bf Applications of the Quantile-Based Probabilistic
Mean Value Theorem to Distorted Distributions}
  \author{Antonio Di Crescenzo\\
   Dipartimento di Matematica, Università di Salerno,\\ Fisciano, ITALY\\\texttt{adicrescenzo@unisa.it}\\
    and\\
    Barbara Martinucci\\
    Dipartimento di Matematica, Università di Salerno,\\ Fisciano, ITALY\\ \texttt{bmartinucci@unisa.it}\\
    and \\
    Julio Mulero\\
    Departamento de Matem\'aticas, Universidad de
Alicante, \\ Alicante, SPAIN \\ \texttt{julio.mulero@ua.es}}
  \maketitle
  \newpage

\newpage
{\LARGE  \begin{center}
    \bf Applications of the Quantile-Based Probabilistic
Mean Value Theorem to Distorted Distributions
\end{center}}

\bigskip
\begin{abstract}
 Distorted distributions were introduced in the context of
actuarial science for several variety of insurance problems. In this paper
we consider the quantile-based probabilistic mean value theorem given in
Di Crescenzo et al. [4] and provide some applications based on distorted
random variables. Specifically, we consider the cases when the underlying
random variables satisfy the proportional hazard rate model and
the proportional reversed hazard rate model. A setting based on random
variables having the ‘new better than used’ property is also analyzed.
\end{abstract}

\noindent%
{\it Keywords:}  Quantile function, Distorted distribution,
Mean value theorem.

\bigskip
\spacingset{1.45} 
\section{Introduction and background} 
A probabilistic generalization of the Taylor’s theorem was proposed and studied
by Massey and Whitt [8] and Lin [7], showing that for a nonnegative random
variable $X$ and a suitable function $f$ one has
\[
E[f(t+X)]=\sum_{k=0}^{n-1}E[X^k]\frac{f^{(k)}(t)}{k!}+E[f^{(n)}(t+X_e)]\frac{f^{(n)}(t)}{n!}, \,\,\,t>0,
\]
where $x_e$ is a random variable possessing the equilibrium distribution of $X$.
This result was employed by Di Crescenzo [3] in order to obtain the following
probabilistic version of the well-known mean value theorem:
\[
E[g(Y)]-E[g(X)] = E[g(Z)][E(Y)-E(X)],
\]
where $X$ and $Y$ are nonnegative random variables such that $X \leq_{st} Y$ , i.e.,
$P(X>x) \leq P(Y>x)$, for all $x \geq 0$, and $E(X) < E(Y )$. Moreover, the random
variable $Z$ is a generalization of $X_e$ and has density function
\[
\frac{P(Z\in dx)}{dx}=\frac{P(Y>x)-P(X>x)}{E(Y)-E(X)}.
\]
Various related results have been exploited recently, such as the fractional
probabilistic Taylor's and mean value theorems (see Di Crescenzo and Meoli
[5]), and a quantile-based version of the probabilistic mean value theorem (see
Di Crescenzo et al. [4]). The latter involves a distribution that generalizes the
Lorenz curve, and allows the construction of new distributions with support
$(0, 1)$. Specifically, for any random variable $X$, let $F(x) = P(X \leq x)$, $x \in R$,
denote the distribution function, and let
\begin{equation}
Q(u)=\inf\{x\in\mathbb R: F(x)\geq u\}, \,\,\,0<u<1
\end{equation}
be the quantile function, with $Q(0) := lim_{u\rightarrow 0} Q(u)$ and $Q(1) := lim_{u\rightarrow 1} Q(u)$.

\begin{definition}
Let $\mathcal D$ be the family of all absolutely continuous random variables
having finite nonzero mean, and such that the quantile function (1) satisfies
$Q(0) = 0$ and is differentiable, in order that the following quantile density
function exists:
\[
q(u)=Q'(u),\,\,\, 0<u<1.
\]
\end{definition}

We remark that if $X \in\mathcal D$ then $F[Q(u)] = u$, $0 < u < 1$. According to Di
Crescenzo et al. [4], if $X \in \mathcal D$, let $X^L$ denote an absolutely continuous random
variable taking values in $(0, 1)$ with distribution function
\begin{equation}
L(p)=\frac{1}{E[X]}\int_0^pQ(u)du,\,\,\,0\leq p\leq 1,
\end{equation}
and density
\[
f_{X^L}(u)=\frac{Q(u)}{E[X]},\,\,\, 0<u<1.
\]
Note that the function given in (2) is also known as the Lorenz curve of $X$,
and deserves large interest in mathematical finance for the representation of the
distribution of income or of wealth. Then, for $X \in\mathcal D$, if $g : (0, 1) \rightarrow \mathbb R$ is $n$-times
differentiable and $g^{(n)} \cdot Q$ is integrable on $(0, 1)$ for any $n \geq 1$, then
\begin{multline*}
E[\{g(1)-g(U)\}q(U)]=\sum_{k=1}^{n-1}E[g^{(k)}(U)(1-U)^kq(U)]\\+\frac{1}{(n-1)!}E[g^{(n)}(X^L)(1-X^L)^{n-1})]E[X],
\end{multline*}
where $U$ is uniformly distributed in $(0, 1)$. Furthermore, the following result can
be viewed as a quantile-based analogue of the probabilistic mean value theorem
(cf. Theorem 3 in Di Crescenzo et al. [4]). In particular, given $X, Y \in\mathcal D$ such
that $X \leq_{st} Y$ and a differentiable 
function $g: (0, 1) \rightarrow \mathbb R$ 
with $g'\cdot Q_X$ and $g'\cdot Q_Y$
integrable on $(0, 1)$, then
\begin{equation}
E[\{g(1)-g(U)\}\{q_Y (U)-q_X(U)\}] = E[g'(Z^L)
\{E[Y ]-E[X]\},
\end{equation}
where $U$ is uniformly distributed in $[0, 1]$ and $q_X$ and $q_Y$ are the quantile densities
of $X$ and $Y$, respectively. Moreover, $Z^L$ denotes a random variable having
distribution function
\[
L_{X,Y}(p)=\frac{1}{E[Y]-E[X]}\int_0^p[Q_Y(u)-Q_X(u)]du,\,\,\,0\leq p\leq 1,
\]
which is a suitable extension of (2). Note also that, under the previous assumptions,
$E[g'(Z^L)]$ in (3) is finite.

Stimulated by the above mentioned results, in this paper we construct new
relationships involving distorted random variables that deserve interest in utility
theory and can be applied for assessing stochastic dominance among risks (we
refer, for instance, to the recent papers by Balbás et al. [1] and Sordo et al.
[12,14]).

Let $\Gamma$ be the set of continuous, nondecreasing and piecewise differentiable
functions $h : [0, 1] \rightarrow [0, 1]$ such that $h(0) = 0$ and $h(1) = 1$. These functions are
called distortion functions. See Sordo and Suárez-Llorens [13] for applications of
distortion functions to classes of variability measures, and Gupta et al. [6] for
the use of distortion functions for the analysis of random lifetimes of coherent
systems. We denote by $\bar F(x) = 1-F(x)$, $x\in\mathbb R$, the survival function of $X$.

\begin{definition}
For each distortion function $h\in\Gamma$, and any survival function $F(x)$, the position
\[
\bar F_h(x) = h(\bar F(x)),\,\,\, x\in\mathbb R,
\]
defines a survival function associated to a new random variable $X_h$, which is
called the distorted random variable induced by $h$.
\end{definition}

Distorted distributions were introduced by Denneberg [2] and Wang [15,16]
in the context of actuarial science for several variety of insurance problems. One
of the most important applications is in the rank dependent expected utility
model (see Quiggin [10], Yaari [17], Schmeidler [11]).

It is easy to see that given a random variable $X$ and a distortion function
$h\in\Gamma$, the distorted random variable induced by $h$, say $X_h$, has quantile function given by
\[
Q_h(u):=Q(1-h^{-1}(1-u)),\,\,\, 0<u<1,
\]
where $Q$ is the quantile function of $X$.

\begin{proposition}
Let $X$ be a nonnegative random variable, and $h, l \in\Gamma$ two distortion functions. Then,
\[
X_h \leq_{st}X_l \text{ if, and only if, } h(x) \leq l(x),\,\,\, 0 < x < 1.
\]
\end{proposition}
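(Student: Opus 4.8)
The plan is to reduce the stochastic order between $X_h$ and $X_l$ to a pointwise comparison of their (distorted) survival functions and then exploit the explicit form supplied by Definition~2. By definition, $X_h\leq_{st}X_l$ holds exactly when $\bar F_h(x)\leq\bar F_l(x)$ for every $x\in\mathbb{R}$, which in view of Definition~2 is precisely the family of inequalities $h(\bar F(x))\leq l(\bar F(x))$, $x\in\mathbb{R}$. Thus the whole proposition is the assertion that this family of inequalities, indexed by $x$, is equivalent to the single condition $h(u)\leq l(u)$ for all $u\in(0,1)$.

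The ``if'' direction is then immediate. Assume $h(u)\leq l(u)$ for every $0<u<1$. For an arbitrary $x$, the value $\bar F(x)$ lies in $[0,1]$: if it lies in $(0,1)$ the hypothesis applies directly, and if it equals $0$ or $1$ then $h(\bar F(x))=l(\bar F(x))$ because $h(0)=l(0)=0$ and $h(1)=l(1)=1$. Hence $\bar F_h(x)\leq\bar F_l(x)$ for all $x$, that is, $X_h\leq_{st}X_l$.

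For the ``only if'' direction I would argue that the hypothesis $h(\bar F(x))\leq l(\bar F(x))$ for all $x$ already forces $h\leq l$ on the whole of $(0,1)$, because the set of values attained by $\bar F$ is rich enough. For the (absolutely continuous) nonnegative random variables at hand, $\bar F$ is continuous and nonincreasing, with $\bar F(x)=1$ for $x<0$ and $\bar F(x)\to 0$ as $x\to+\infty$, so by the intermediate value theorem every $u\in(0,1)$ can be written as $u=\bar F(x_u)$ for some $x_u$; evaluating the hypothesis at $x_u$ yields $h(u)\leq l(u)$, as desired.

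The step I expect to be the main obstacle is exactly this last one: one must guarantee that $\{\bar F(x):x\in\mathbb{R}\}$ is dense in $(0,1)$, so that the inequality can propagate from the range of $\bar F$ to the entire unit interval (density together with the continuity of $h$ and $l$ is all one really needs). This is where the continuity of $F$ enters in an essential way, and it cannot be dropped: for a random variable taking only two values the distorted variables $X_h$ and $X_l$ coincide no matter what $h$ and $l$ are, so nothing can be concluded about $h$ and $l$. Within the absolutely continuous framework used throughout the paper this difficulty does not occur, and the argument above closes.
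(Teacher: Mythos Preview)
Your argument is correct and follows exactly the paper's line: the paper's proof is the single sentence that $X_h \leq_{st} X_l$ holds iff $h(\bar F(x)) \leq l(\bar F(x))$, equivalently $h(x)\leq l(x)$ on $(0,1)$. You are more explicit than the paper about why the ``only if'' direction needs the range of $\bar F$ to fill $(0,1)$; one small slip in your side remark is that for a two-valued $X$ the distorted variables $X_h$ and $X_l$ do \emph{not} generally coincide (they depend on $h,l$ at the single interior value of $\bar F$), so the clean counterexample is a degenerate one-point $X$.
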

\begin{proof}
The proof immediately follows noting that $X_h \leq_{st} X_l$ holds if, and only
if, $h(F(x)) \leq l(F(x))$ or, equivalently, $h(x) \leq l(x)$, for all $0 < x < 1$.
\end{proof}

\section{Results Based on Distorted Random Variables}

In this section we provide some applications of (3) based on the comparisons of
distorted random variables.

\begin{theorem}
Let $X \in\mathcal D$ be a nonnegative random variable with quantile function
$Q$ and quantile density $q$. Let h and l be two distortion functions such that
$h(x) \leq l(x)$, for all $0 < x < 1$, one has $E[X_l] < E[X_h] < +\infty$. Then, for a
random variable $U$ uniformly distributed in $(0, 1)$ we have that
\begin{multline*}
E\left[\left(\frac{q(1-l^{-1}(1-U))}{l'(l^{-1}(1-U))}-\frac{q(1-h^{-1}(1-U))}{h'(h^{-1}(1-U))}\right)(g(1)-g(U))\right]\\=E[g'(Z^L)](E[X_l]-E[X_h]),
\end{multline*}
where 
\[
E[X_h]=\int_0^\infty h(\bar F(t))dt,
\]
and $Z^L$ is the random variable with density function
\begin{equation}
f_{Z^L}(x)=\frac{Q(1-l^{-1}(1-x))-Q(1-h^{-1}(1-x))}{E[X_l]-E[X_h]},\,\,\,0<x<1.
\end{equation}
\end{theorem}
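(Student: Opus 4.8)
The plan is to apply the quantile-based probabilistic mean value theorem (3) to the pair $(X_h,X_l)$, with $X_h$ in the role of $X$ and $X_l$ in the role of $Y$. First I would check the hypotheses of (3). By Proposition 1 the assumption $h(x)\le l(x)$ on $(0,1)$ yields $X_h\leq_{st}X_l$, and since $\bar F_h(t)=h(\bar F(t))\le l(\bar F(t))=\bar F_l(t)$ the layer-cake identity $E[W]=\int_0^\infty\bar F_W(t)\,dt$ (applied to $W=X_h$ and $W=X_l$) gives both $E[X_h]=\int_0^\infty h(\bar F(t))\,dt$ — the last displayed formula in the statement — and $E[X_h]\le E[X_l]$, so we work under the strict version of this inequality with both means finite. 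To see that $X_h,X_l\in\mathcal D$, I would use $Q_h(u)=Q(1-h^{-1}(1-u))$: as $u\downarrow 0$ we have $h^{-1}(1-u)\uparrow h^{-1}(1)=1$, hence $1-h^{-1}(1-u)\downarrow 0$ and $Q_h(0)=Q(0)=0$ because $X\in\mathcal D$; absolute continuity and differentiability of $Q_h$ follow from those of $Q$ together with the differentiability and strict monotonicity of $h$. The same applies to $l$.

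Next I would compute the quantile densities by differentiating $Q_h(u)=Q(1-h^{-1}(1-u))$ via the chain rule, using $(h^{-1})'(v)=1/h'(h^{-1}(v))$:
\[
q_h(u)=Q'\!\left(1-h^{-1}(1-u)\right)(h^{-1})'(1-u)=\frac{q\!\left(1-h^{-1}(1-u)\right)}{h'\!\left(h^{-1}(1-u)\right)},
\]
and analogously for $q_l$. Subtracting, $q_l(U)-q_h(U)$ is precisely the factor multiplying $g(1)-g(U)$ in the claimed identity.

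Then I would invoke (3) with $X=X_h$ and $Y=X_l$. Its left-hand side is $E[\{g(1)-g(U)\}\{q_l(U)-q_h(U)\}]$, which matches the left-hand side of the theorem; its right-hand side is $E[g'(Z^L)]\{E[X_l]-E[X_h]\}$, where $Z^L$ has distribution function $L_{X_h,X_l}(p)=\frac{1}{E[X_l]-E[X_h]}\int_0^p[Q_l(u)-Q_h(u)]\,du$. Differentiating and substituting $Q_h(x)=Q(1-h^{-1}(1-x))$ and $Q_l(x)=Q(1-l^{-1}(1-x))$ produces exactly the density (4); finiteness of $E[g'(Z^L)]$ is guaranteed by the remark following (3).

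The step I expect to be the main obstacle is the verification that $X_h,X_l$ genuinely lie in $\mathcal D$ and that $g'\cdot Q_{X_h}$, $g'\cdot Q_{X_l}$ are integrable on $(0,1)$, so that (3) is legitimately applicable — in particular, dealing with the points where $h$, being only piecewise differentiable, has derivative zero or undefined (where $q_h$ may fail to exist or blow up), and confirming that $h^{-1}$ is well defined, which requires $h$ to be strictly increasing. Once this regularity is secured, the theorem is a direct substitution into (3).
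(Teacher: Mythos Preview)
Your proposal is correct and follows essentially the same approach as the paper: compute the quantile densities $q_h$ and $q_l$ of the distorted variables via the chain rule and then substitute directly into the quantile-based mean value theorem (3). The paper's own proof is in fact much terser than yours---it simply states the formulas for $q_h$ and $q_l$ and invokes (3)---so your additional verification of the hypotheses (membership in $\mathcal D$, the layer-cake formula for $E[X_h]$, integrability, and the regularity caveats about $h$ being only piecewise differentiable) goes beyond what the paper supplies.
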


\begin{proof}
Denoting as $q_h$ and $q_l$ the quantile densities corresponding to $Q_h$ and $Q_l$,
respectively. Then, for $0 < u < 1$, one has
\[
q_h(u)=\frac{q(1-h^{-1}(1-u))}{h'(h^{-1}(1-u)}\text{ and }q_l(u)=\frac{q(1-l^{-1}(1-u))}{l'(l^{-1}(1-u)}.
\]
Hence, the thesis follows from (3).
\end{proof}

There exist several types of distortion functions that leads to special cases of
interest. For instance, if $h(t) = t^\alpha$, then $X_h$ and $X_l$ correspond to the proportional
hazard rate model (see, for instance, Balbás et al. [1] and Navarro et al.
[9]). This suggests our first application.

\begin{application}
Let us consider the distortion functions $h(t) = t^\alpha$ and $l(t) = t^\beta$
for $0 < \beta < \alpha$ and $0 < t < 1$, so that $h(t) \leq l(t)$ for $0 < t < 1$. We can
consider the distorted random variables induced by $h$ and $l$, say $X_h$ and $X_l$,
respectively. Hence, due to Proposition 1, we have $X_h \leq_{st} X_l$. It is easy to see
that the survival functions of $X_h$ and $X_l$ are
\[
\bar F_h(x)=(\bar F(x))^\alpha\text{ and }\bar F_l(x)=(\bar F(x))^\beta,\,\,\, x>0,
\]
respectively. With straightforward calculations, we can obtain the corresponding
quantile functions
\[
Q_h(u)=Q(1-(1-u)^{1/\alpha})\text{ and }\bar Q_l(u)=Q(1-(1-u)^{1/\beta}),
\]
and the quantile densities
\[
q_h(u)=\frac{1}{\alpha} q(1-(1-u)^{1/\alpha})(1-u)^{\frac{1}{\alpha}-1}\text{ and }q_l(u)=\frac{1}{\beta} q(1-(1-u)^{1/\beta})(1-u)^{\frac{1}{\beta}-1},
\]
for $0 < u < 1$, where $Q$ and $q$ are respectively the quantile and quantile density
function of $X$. Then, from Theorem 1, we have
\begin{multline*}
E\left[\left(\frac{q(1-(1-U)^{1/\beta})}{(1-U)^{1-\frac{1}{\beta}}}-\frac{q(1-(1-U)^{1/\alpha})}{(1-U)^{1-\frac{1}{\alpha}}}\right)(g(1)-g(U))\right]\\=E[g'(Z^L)](E[X_l]-E[X_h]),
\end{multline*}
where the density of $Z^L$, given in (4), becomes
\[
f_{Z^L}(x)=\frac{Q(1-(1-x)^{1/\alpha})-Q(1-(1-x)^{1/\beta})}{E[X_l]-E[X_h]},\,\,\,0<x<1.
\]
with $E[X_l]=\int_0^\infty(\bar F(x))^\alpha dx$ and $E[X_h]=\int_0^\infty(\bar F(x))^\beta dx$. For instance, if $X$ is uniformly distributed in $(0,1)$, then $!(u)=u$m abd $q(u)=1$, $0<u<1$, so that $E[X_l]=(\alpha+1)^{-1}$ and $E[X_h]=(\beta+1)^{-1}$. Therefore,
\begin{equation*}
E\left[\left(\frac{(1-U)^{\frac{1}{\beta}-1}}{\beta}-\frac{(1-U)^{\frac{1}{\alpha}-1}}{\alpha}\right)(g(1)-g(U))\right]=E[g'(Z^L)]\frac{\alpha-\beta}{(\alpha+1)(\beta+1)},
\end{equation*}
where, for $0\beta<\alpha$,
\[
f_{Z^L}(x)=\frac{(\alpha+1)(\beta+1)}{\alpha-\beta}((1-x)^{1/\alpha}-(1-x)^{1/\beta}),\,\,\, 0<x<1.
\]
\end{application}

Next we consider $h(t) = 1-(1-t)^n$, $0 < t < 1$, for some positive integer $n$.
Note that in this case $E[X_h] = E[\max\{X_1, \dots, X_n\}]$, where $X_1, \dots, X_n$ are i.i.d.
random variables.

\begin{application}
Let us consider the distortion functions $h(t) = 1-(1-t)^n$ and
$l(t) = 1-(1-t)^m$, $0 < t < 1$, for integers $1 \leq n \leq m$. It is not hard to see that
these distortion functions refer to the proportional reversed hazard rate model.
Hence, since $h(t) \leq l(t)$, $0 < t < 1$, the corresponding distorted random variables
satisfy $X_h \leq_{st} X_l$ due to Proposition 1. The survival and quantile functions of
$X_h$ and $X_l$ are respectively
\[
\bar F_h(x)=1-F^n(x)\text{ and }\bar F_l(x)=1-F^m(x),\,\,\, x>0,
\]
\[
Q_h(x)=Q(1-u^{1/n})\text{ and }Q_l(x)=Q(1-u^{1/m}),\,\,\, 0<u<1,
\]
where $F(x)$ and $Q(u)$ are the distribution and the quantile function of the i.i.d.
random variables $X_1, \dots, X_n$, respectively. If $M_k:=\max\{X_1, \dots, X_k\}$, for $k\geq 1$, then from Theorem 1 we have
\begin{equation}
E\left[\left(\frac{q(U^{\frac{1}{m}})}{mU^{1-\frac{1}{m}}}-\frac{q(U^{\frac{1}{n}})}{nU^{1-\frac{1}{n}}}\right)(g(1)-g(U))\right]=E[g'(Z^L)](E[M_m]-E[M_n]),
\end{equation}
where
\[
f_{Z^L]}(x)=\frac{Q(x^{\frac{1}{m}})-Q(x^{\frac{1}{n}})}{E[M_m]-E[M_n]},\,\,\, 0<x<1.
\]
For instance, if $X$ is exponentially distributed with parameter $\lambda > 0$, it is known
that, for each integer $n \geq 1$,
\[
E[M_n]=\frac{1}{\lambda}\int_0^1\frac{1-t^n}{1-t}dt=\frac{1}{\lambda}H_n,
\]
where $H_n:=\sum_{k=1}^n\frac{1}{k}$ is the $n$th harmonic number. Hence, for $1\leq n\leq m$, from (5) and given that $q(u)=[\lambda(1-u)]^{-1}$, for all $0<u<1$, we have
\begin{equation*}
E\left[\left(\frac{1}{m}\frac{1}{U^{-1/m}-1}-\frac{1}{n}\frac{1}{U^{-1/n}-1}\right)(g(1)-g(U))\right]=E[g'(Z^L)](H_m-H_n),
\end{equation*}
where
\[
f_{Z^L}(x)=\frac{1}{H_m-H_n}\log\frac{1-x^{1/n}}{1-x^{1/m}},\,\,\, 0<x<1.
\]
The following theorem involves a random variable having the NBU property.
We recall that a nonnegative random variable $X$ is said NBU if its survival
function $\bar F$ satisfies $\bar F(s)\bar F(t) \geq \bar F(s + t$), for all $s \geq 0$ and $t \geq 0$.

\begin{theorem}
Let $X$ be a nonnegative random variable with quantile density $q$,
and having the NBU property, and let $h$ be a distortion function. Then, for a
random variable $U$ uniformly distributed in $(0, 1)$, and for all $t > 0$, one has
\begin{multline*}
E\left[\left(\frac{q(1-h^{-1}(1-U))}{h'(h^{-1}(1-U))}-\bar F(t)\frac{q(1-h^{-1}(1-U)\bar F(t))}{h'(h^{-1}(1-U))}\right)(g(1)-g(U))\right]\\=E[g'(Z^L)](E[X_h]-E[(X_t)_h]),
\end{multline*}
where $E[X_h]=\int_0^\infty h(\bar F(t))dt$ and, given $t>0$, $(X_t)_h$ and $Z^L$ are respectively the random variables having survival and density functions given by
\[
\bar F_{(X_t)_h}(x)=h\left(\frac{\bar F(x+t)}{\bar F(t)}\right),\,\,\, x\geq 0,
\]
\[
f_{Z^L}(x)=\frac{Q(1-h^{-1}(1-x))-Q(1-h^{-1}(1-x)\bar F(t))+t}{E[X_h]-E[(X_t)_h]},\,\,\, 0<x<1.
\]
\end{theorem}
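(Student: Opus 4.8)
The plan is to obtain the identity as a direct instance of the quantile-based probabilistic mean value theorem~(3), applied to the ordered pair $(X_t)_h \leq_{st} X_h$. The first step is to verify this stochastic order. Since $X$ is NBU we have $\bar F(x+t) \leq \bar F(x)\,\bar F(t)$ for all $x,t\geq 0$, that is $\bar F(x+t)/\bar F(t) \leq \bar F(x)$; because the distortion $h$ is nondecreasing this gives $\bar F_{(X_t)_h}(x) = h\!\left(\bar F(x+t)/\bar F(t)\right) \leq h(\bar F(x)) = \bar F_{X_h}(x)$ for every $x\geq 0$, hence $(X_t)_h \leq_{st} X_h$. I would also note that both $X_h$ and $(X_t)_h$ belong to $\mathcal D$: absolute continuity and differentiability of the quantile functions are inherited from $X\in\mathcal D$ together with the piecewise differentiability of $h$, while the boundary conditions $Q_{X_h}(0) = Q(0) = 0$ and $Q_{(X_t)_h}(0) = Q(F(t)) - t = 0$ follow from $h^{-1}(1) = 1$ and $F[Q(u)] = u$. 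Finiteness of the means is clear, and one assumes $E[(X_t)_h] < E[X_h] < +\infty$, i.e.\ that the NBU inequality is strict on a set of positive measure (otherwise $X_h$ and $(X_t)_h$ coincide in law and the statement is vacuous), as required to invoke~(3).

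The second step computes the two quantile densities entering~(3). For $X_h$ the quantile function is $Q_h(u) = Q(1 - h^{-1}(1-u))$, as recalled in the Introduction; differentiating and using $(h^{-1})'(v) = 1/h'(h^{-1}(v))$ yields $q_{X_h}(u) = q(1 - h^{-1}(1-u))/h'(h^{-1}(1-u))$. For $(X_t)_h$ I would first record that the residual lifetime $X_t$ has survival function $\bar F(x+t)/\bar F(t)$ and therefore quantile function $Q_{X_t}(u) = Q(1 - (1-u)\bar F(t)) - t$; composing with the distortion via $Q_{(X_t)_h}(u) = Q_{X_t}(1 - h^{-1}(1-u))$ then gives $Q_{(X_t)_h}(u) = Q(1 - h^{-1}(1-u)\,\bar F(t)) - t$. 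A further differentiation produces $q_{(X_t)_h}(u) = \bar F(t)\, q(1 - h^{-1}(1-u)\,\bar F(t))/h'(h^{-1}(1-u))$, so that $q_{X_h}(U) - q_{(X_t)_h}(U)$ is exactly the bracketed factor multiplying $g(1) - g(U)$ in the claim.

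The third step assembles the result. Taking $Y = X_h$ and $X = (X_t)_h$ in~(3), under its hypotheses (differentiability of $g$ with $g'\cdot Q_{X_h}$ and $g'\cdot Q_{(X_t)_h}$ integrable on $(0,1)$, which also guarantees finiteness of $E[g'(Z^L)]$), we get $E[\{g(1)-g(U)\}\{q_{X_h}(U) - q_{(X_t)_h}(U)\}] = E[g'(Z^L)]\,(E[X_h] - E[(X_t)_h])$, where $Z^L$ has distribution function $L_{X_h,(X_t)_h}(p) = (E[X_h]-E[(X_t)_h])^{-1}\int_0^p [Q_{X_h}(u) - Q_{(X_t)_h}(u)]\,du$ and hence density $(Q_{X_h}(x) - Q_{(X_t)_h}(x))/(E[X_h]-E[(X_t)_h])$; substituting the two quantile functions from the previous step gives the stated $f_{Z^L}$, the term $+t$ arising from the additive shift in $Q_{(X_t)_h}$. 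Finally, $E[X_h] = \int_0^\infty \bar F_{X_h}(x)\,dx = \int_0^\infty h(\bar F(x))\,dx$.

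I expect the only real obstacle to be the computation of $Q_{(X_t)_h}$: one must correctly compose the residual-life transformation with the distortion, carefully tracking the additive shift $-t$ and the factor $\bar F(t)$ appearing inside $h^{-1}$, and then confirm that $(X_t)_h$ still lies in $\mathcal D$ so that~(3) legitimately applies. Once that is settled, the remaining work is two chain-rule differentiations and a routine substitution.
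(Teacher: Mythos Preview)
Your proposal is correct and follows essentially the same route as the paper: compute the quantile densities $q_{X_h}$ and $q_{(X_t)_h}$, and then invoke the quantile-based mean value theorem (equation~(3) / Theorem~1). The paper's proof merely states the two quantile densities and then says ``the thesis then follows from Theorem~1''; your version is more explicit in deriving $Q_{(X_t)_h}$, in checking the stochastic ordering from the NBU property, and in verifying membership in $\mathcal D$, but the underlying argument is the same.
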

\begin{proof}
Denoting as $q_h$ and $q_{t,h}$ the quantile densities of $X_h$ and $(X_t)_h$, respectively,
for $0 < u < 1$ we have
\[
q_h(u)=\frac{q(1-h^{-1}(1-u))}{h'(h^{-1}(1-u))}\text{ and }q_{t,h}(u)=\bar F(t)\frac{q(1-h^{-1}(1-u)\bar F(t))}{h'(h^{-1}(1-u))}.
\]
The thesis then follows from Theorem 1.
\end{proof}
\end{application}

\begin{application}
Let $X$ be an uniformly distributed random variable in $(0, 1)$.
It is well known that $X$ is NBU. The survival and quantile function of $X_h$ are
given respectively by
\[
\bar F_h(x)=h(1-x),\,\,\, 0<x<1,\text{ and }Q_h(u)=1-h^{-1}(1-u),\,\,\, 0<u<1.
\]
Similarly, for the distorted random variable $(X_t)_h$ given $t \in (0, 1)$, we have
\[
\bar F_{(X_t)_h}(x)=h\left(\frac{1-(t+x)}{1-t}\right),\,\,\, 0<x<1-t,
\]
\[
Q_{(X_t)_h}(x)=1-(2-t)h^{-1}(1-u),\,\,\, 0<u<1,
\]
The corresponding quantile densities of $X_h$ and $(X_t)_h$ are
\[
q_h(u)=\frac{1}{h'(h^{-1}(1-u))}\text{ and }q_{t,h}(u)=\frac{1-t}{h'(h^{-1}(1-u))} ,\,\,\, 0<u<1,
\]
From Theorem 2, we have
\begin{multline}
E\left[\left(\frac{1}{h'(h^{-1}(1-U))}-\frac{1-t}{h'(h^{-1}(1-U))}\right)(g(1)-g(U))\right]\\=E[g'(Z^L)](E[X_h]-E[(X_t)_h]),
\end{multline}
where $Z^L$ is the random variable having density
\[
f_{Z^L}(x)=\frac{(1-t)h^{-1}(1-x)-h^{-1}(1-x)}{E[X_h]-E[(X_t)_h]},\,\,\, 0<x<1,
\]
Let us now consider 
\[
h(t) = \min\left\{\frac{t}{1-p},1\right\},
\]
 for a fixed $p \in (0, 1)$. It can be seen
that $h$ is a proper distortion function, and that $E[X_h] = E[X|X >Q(p)]$ for any
random variable $X$ having quantile function $Q(p)$. Specifically, if $X$ is uniformly
distributed in $(0, 1)$, we have
\[
E[X|X>Q(p)]=\frac{1+p}{2}\text{ and }E[X_t|X_t>Q_t(p)]=\frac{(1-t)(1+p)}{2}.
\]
Hence, since $h^{-1}(u)=(1-p)u$ and $h'(u)=\frac{1}{1-p}$, $0<u<1$, (6) gives
\[
(1-p)E[g(1)-g(U)]=E[g'(Z^L)]\frac{1+p}{2},\,\,\, 0<p<1,
\]
where the density of $Z^L$ is
\[
f_{Z^L}(x)=2\frac{1+(1-p)(1-x)}{1+p},\,\,\, 0<x<1.
\]
\end{application}

\section{Conclusions}

The quantile-based probabilistic mean value theorem proposed in [4] has been
shown to be useful (i) to construct new probability densities with support $(0, 1)$
starting from suitable pairs of stochastically ordered random variables, and (ii)
to obtain equalities involving uniform-$(0, 1)$ distributions and quantile functions.
On this ground, further applications have been provided in the present paper
based on distorted distributions, with special care to the cases when the underlying
random variables satisfy the proportional hazard rate model, the proportional
reversed hazard rate model, and the 'new better than used' property.

\end{document}